\documentclass[12pt]{amsart}
\usepackage{fullpage}
\usepackage{times}
\usepackage{latexsym}
\usepackage{amssymb}
\usepackage[all]{xy}

\pagestyle{headings}
\setlength{\headheight}{6.5pt}
\setlength{\headsep}{0.5cm}

\newtheorem{thm}{Theorem}[section]

\newtheorem{lem}[thm]{Lemma}

\theoremstyle{remark}
\newtheorem{rem}[thm]{Remark}

\usepackage{hyperref}

\newcommand{\CP}{\mathbb{CP}}

\newcommand{\Q}{\mathbb{Q}}

\newcommand{\Z}{\mathbb{Z}}
\newcommand{\C}{\mathbb{C}}

\newcommand{\SO}{\mathrm{\SO}}

\title[Sets of degrees of maps between $SU(2)$-bundles over $S^5$]
{Sets of degrees of maps between SU(2)-bundles over the 5-sphere}

\author{Jean-Fran\c cois Lafont and Christoforos Neofytidis}
\address{Department of Mathematics, Ohio State University, Columbus, OH 43210, USA}
\email{jlafont@math.ohio-state.edu}
\address{Section de Math\'ematiques, Universit\'e de Gen\`eve, 2-4 rue du Li\`evre, Case postale 64, 1211 Gen\`eve 4, Switzerland}
\email{Christoforos.Neofytidis@unige.ch}
\date{June 5, 2018 (final version accepted for publication ) and \today \ (erratum)}
\subjclass[2010]{55M25, 55N45, 55R20, 55R25, 55S10, 55S35, 57N65}
\keywords{Mapping degree sets, $SU(2)$-bundles over $S^5$, Steenrod squares}

\begin{document}

\begin{abstract}
We compute the sets of degrees of maps between principal $SU(2)$-bundles over $S^5$, i.e. between any of the manifolds $SU(2)\times S^5$ and $SU(3)$. We show that the Steenrod squares provide the only obstruction to the existence of a mapping degree between these manifolds, and construct explicit maps realizing each integer that occurs as a mapping degree.

\medskip

\noindent{\em Added Erratum.} After this manuscript was accepted for publication by {\em Transformation Groups}, Xueqi Wang~\cite{Wang} pointed out a mistake in our paper. At the end of this arXiv version we add an erratum, where we correct the statement and the proof of Theorem \ref{t:s3bundlesovers5}.
\end{abstract}

\maketitle

\section{introduction}

A fundamental question in topology is whether, given two closed oriented $n$-dimensional manifolds $M$ and $N$, there is a map $f\colon M\longrightarrow N$ of degree $\deg(f)\neq0$. Recall that a continuous map $f\colon M\longrightarrow N$ has degree $d$ if $f_*([M]) = d\cdot [N]$, where $f_*\colon H_n(M)\longrightarrow H_n(N)$ is the induced homomorphism in homology and $[M]\in H_n(M)$ is the fundamental class of $M$. The dual formulation in cohomology says that $\deg(f)=d$ if $f^*(\omega_N)=d\cdot\omega_M$, where $f^*\colon H^n(N)\longrightarrow H^n(M)$ is the induced homomorphism in cohomology and $\omega_M\in H^n(M)$ is the cohomological (dual) fundamental class of $M$.

The set of degrees of maps from $M$ to $N$ is defined as
\[
D(M,N)=\{d\in\Z \ | \ \exists \ f\colon M\longrightarrow N, \ \deg(f)=d\}.
\]
For $M=N$, we write $D(M)$ to denote the set of degrees of self-maps of $M$.

In general, it is a difficult question to determine whether a given integer can be realized as a mapping degree between two manifolds. The answer is well-known in dimensions one and two. A fairly complete answer is known for self-mapping degrees in dimension three~\cite{SWWZ}, for certain classes of product manifolds~\cite{Neo}, and for maps between certain highly connected manifolds~\cite{DW}. Obstructions to the existence of a map of non-zero degree or of a particular mapping degree have been developed using a variety of tools of algebraic topology.  
One of the most classical methods is to compare the cohomology rings of $M$ and $N$. However, when 
\[
H^*(M)\cong H^*(N),
\]
then major obstructions such as the ranks of (co)homology groups, the injectivity of induced homomorphisms in cohomology or the (sub)ring structures themselves no longer suffice to answer this question.

Our goal is to investigate manifolds $M$ and $N$ with isomorphic cohomology rings and find possible other obstructions to the existence of a mapping degree in $D(M,N)$ or $D(N,M)$. In this paper, we consider the two principal $SU(2)$-bundles over $S^5$, namely $SU(2)\times S^5\cong S^3\times S^5$ and $SU(3)$, and prove the following:

\begin{thm}\label{t:s3bundlesovers5}
The sets of degrees of maps between principal $SU(2)$-bundles over $S^5$ are given as follows:
\begin{itemize}
\item[(a)] $D(S^3\times S^5)=\Z$;
\item[(b)] $D(S^3\times S^5, SU(3))=2\Z$;
\item[(c)] $D(SU(3),S^3\times S^5)=2\Z$;
\item[(d)] $D(SU(3))=4\Z\cup\{2k+1:k\in\Z\}$.
\end{itemize}
\end{thm}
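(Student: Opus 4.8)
The argument has two halves: an \emph{obstruction} half giving the inclusions ``$\subseteq$'', and a \emph{construction} half giving ``$\supseteq$''. For the obstructions, the point is that $SU(3)$ and $S^3\times S^5$ are separated by the Steenrod square $Sq^2$. Both manifolds have mod-$2$ cohomology an exterior algebra on classes $u$ in degree $3$ and $v$ in degree $5$, with $H^8\cong\Z/2$ generated by $uv$ (Poincar\'e duality makes the pairing $H^3\times H^5\to H^8$ unimodular, so cup product identifies the top class). On $S^3\times S^5$ the Cartan formula forces $Sq^2=0$ on $H^*( \cdot ;\Z/2)$. On $SU(3)$, however, the $5$-skeleton is $S^3\cup_\eta e^5\simeq\Sigma\CP^2$ — the $5$-cell is attached along the nontrivial $\eta\in\pi_4(S^3)=\pi_4(SU(2))$ — and this is exactly the statement that $Sq^2\colon H^3(SU(3);\Z/2)\to H^5(SU(3);\Z/2)$ is an isomorphism. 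Now for any map $f$ between two of our manifolds, write $f^*$ as multiplication by integers $p$ on $H^3$ and $q$ on $H^5$; then $\deg f=pq$ by the unimodular cup pairing. Naturality $f^*Sq^2=Sq^2f^*$ applied to the degree-$3$ generator gives: $q$ even in case (b), $p$ even in case (c), and $p\equiv q\pmod 2$ in case (d). Since $\deg f=pq$, in (d) this is odd when $p,q$ are odd and lies in $4\Z$ when they are even, yielding all three claimed inclusions; in (a) no constraint arises.

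For the constructions, (a) is immediate: a degree-$d$ self-map of $S^3$ in the first factor and the identity in the second gives degree $d$. For (b) I would use the group structure on $SU(3)$. First, the Hurewicz homomorphism $\pi_5(SU(3))=\pi_5(\Sigma\CP^2)\to H_5(SU(3))\cong\Z$ has image $2\Z$: this follows from the long exact sequence of the pair $(\Sigma\CP^2,S^3)$, because the relative homotopy generator has boundary $\eta$ in $\pi_4(S^3)=\Z/2$. Pick $g\colon S^5\to SU(3)$ realizing $2\cdot(\text{generator})$, let $j\colon S^3=SU(2)\hookrightarrow SU(3)$ be the standard inclusion, and set $h(a,b)=j(a)\cdot g(b)$. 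Since $H^2(SU(3))=0$ the generators $y_3,y_5$ are primitive, so $h^*y_3=x_3$ and $h^*y_5=2x_5$; hence $\deg h=2$, and composing $h$ with degree-$k$ self-maps of $S^3\times S^5$ realizes all of $2\Z$.

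The same H-space device handles (d). Write $\pi\colon SU(3)\to S^5$ for the bundle projection (which is an isomorphism on $H^5$, by the Serre spectral sequence, as $H^4(S^5)=0$), $\psi^m$ for the $m$-th power map of $SU(3)$ (multiplication by $m$ on primitives), and keep the map $g$ from (b), noting $g^*y_3=0$ and $g^*y_5=2x_5$. Then the self-map $x\mapsto x\cdot\psi^m\bigl(g(\pi(x))\bigr)$ has $f^*y_3=y_3$ and $f^*y_5=(1+2m)y_5$, so degree $1+2m$, exhausting the odd integers; and $x\mapsto\psi^2(x)\cdot\psi^{k-1}\bigl(g(\pi(x))\bigr)$ has $f^*y_3=2y_3$, $f^*y_5=2ky_5$, so degree $4k$, exhausting $4\Z$. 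Together with the inclusion from the first paragraph this gives $D(SU(3))=4\Z\cup\{2k+1:k\in\Z\}$.

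The step I expect to require the most care is (c): there is no H-space structure on the target $S^3\times S^5$ to exploit, and indeed the constraint above says every map $SU(3)\to S^3$ is multiplication by an even integer on $H^3$, so one must produce such a map with multiplier exactly $\pm2$. I would build it from the bundle $\pi\colon SU(3)\to S^5$ with fibre $SU(2)$, classified by $\eta\in\pi_4(SU(2))$. A section $s$ of $\pi$ over a disk in $S^5$ yields a partial ``projection'' $\rho_0(x)=s(\pi(x))^{-1}x$ on the preimage of that disk, equal to the identity on each fibre there; the obstruction to extending $\rho_0$ across the remaining fibre is measured by the clutching function $\eta$, hence is nonzero, but post-composing with the squaring map $\delta\colon SU(2)\to SU(2)$ replaces it by $\delta_*\eta=2\eta=0$ in $\pi_4(SU(2))$. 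A short homotopy, using that squaring acts as multiplication by $2$ on $\pi_4(SU(2))$ and that inner automorphisms of $SU(2)$ are homotopic to the identity, then shows $\delta\circ\rho_0$ extends to $\rho\colon SU(3)\to S^3$ with $\rho^*$ equal to multiplication by $2$ on $H^3$ (checked by restricting to a fibre, where $\rho$ is $\delta$, and using that fibre restriction $H^3(SU(3))\to H^3(SU(2))$ is an isomorphism). Then $(\rho,\pi)\colon SU(3)\to S^3\times S^5$ has degree $2$, and composing with degree-$k$ self-maps of $S^3\times S^5$ gives $2\Z$. Assembling the four inclusions with these four families of maps proves the theorem.
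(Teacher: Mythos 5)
Your obstruction half coincides with the paper's: naturality of $Sq^2$, together with the fact that $Sq^2\colon H^3\to H^5$ is an isomorphism for $SU(3)$ and zero for $S^3\times S^5$, gives exactly the parity constraints in (b), (c), (d). Your constructions for (a), (b) and (d) are correct but genuinely different in route: the paper instead pulls back the bundle $SU(3)\stackrel{p}\to S^5$ along self-maps of $S^5$ (even base degree forces the trivial bundle, giving (b); odd base degree forces the nontrivial bundle, giving the odd part of (d)), obtains $4\Z\subseteq D(SU(3))$ by composing the maps from (b) and (c), and uses a lemma that the degree of a bundle map is the product of the fibre and base degrees; your alternative via the H-space structure (Hurewicz image $2\Z$ in $\pi_5(SU(3))$, primitivity of the degree $3$ and $5$ generators, power maps $\psi^m$, and translation by $g\circ\pi$) checks out and is a legitimate replacement, though arguably heavier than the pullback trick.

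The genuine gap is in (c), the step you flag yourself. The extension problem for $\delta\circ\rho_0$ is not governed by a single obstruction in $\pi_4(SU(2))$. Writing $SU(3)$ as two trivialized pieces glued along $\pi^{-1}(S^4)\cong S^4\times S^3$ via a clutching map $c$ representing $\eta$, you must extend the map $(q,V)\mapsto (c(q)V)^2$ from $S^4\times S^3$ over $D^5\times S^3$; the relative cells have dimensions $5$ and $8$, so there are obstructions in $\pi_4(S^3)\cong\Z_2$ \emph{and} in $\pi_7(S^3)\cong\Z_2$ (equivalently, the obstruction is the free homotopy class of the adjoint map $S^4\to \mathrm{Map}_{\deg 2}(S^3,S^3)$, not an element of $\pi_4(S^3)$). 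Your computation $\delta_*\eta=0$ only kills the primary layer. The secondary, $\pi_7(S^3)$-valued layer is exactly where the proposed ``short homotopy'' fails as stated: ``inner automorphisms are homotopic to the identity'' is a pointwise statement, whereas you need to trivialize the parametrized family $q\mapsto \mathrm{Ad}_{c(q)}$, and that family is essential, since $\mathrm{Ad}\colon SU(2)\to SO(3)$ is a covering and hence an isomorphism on $\pi_4$, so $[\mathrm{Ad}\circ c]\neq 0$ in $\pi_4(SO(3))\cong\Z_2$. After cancelling $c(q)^2$ you are therefore left with a Samelson-product-type class in $\pi_7(S^3)$ whose vanishing requires an actual argument. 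The conclusion is true, but the paper proves it by a different device that avoids obstruction theory entirely: an explicit matrix formula for a self-map $g$ of $SU(3)$ that covers a degree-$2$ self-map of $S^5$ and is the squaring map on the fibre, which (since the base degree is even) factors through the trivial pullback bundle $f^*(SU(3))\cong S^3\times S^5$, yielding the desired degree-$2$ map $SU(3)\to S^3\times S^5$. To complete your write-up you would either need to carry out the secondary-obstruction computation or substitute such an explicit construction.
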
 

The bundles $S^3\times S^5$ and $SU(3)$ indeed have isomorphic cohomology rings. However, their Steenrod squares behave differently on degree three cohomology, since $Sq^2$ is trivial for the product $S^3\times S^5$, but an isomorphism for the non-trivial bundle $SU(3)$, mapping the generator $\overline{\alpha}\in H^3(SU(3);\Z_2)$ to the generator $\overline{\beta}\in H^5(SU(3);\Z_2)$; cf. Section \ref{bundlesintro}. As we will see in the course of the proof of Theorem \ref{t:s3bundlesovers5}, the non-triviality of the Steenrod square on $SU(3)$ implies that odd numbers cannot be realized as degrees of maps between $SU(3)$ and $S^3\times S^5$ (in those cases, $Sq^2(\overline{\alpha})$ appears once in the computation) and numbers of type $2\cdot(\text{odd})$ cannot be realized as degrees of self-maps of $SU(3)$ (in that case, $Sq^2(\overline{\alpha})$ appears twice in the computation). The complete computation of Theorem \ref{t:s3bundlesovers5} shows that this is the only obstruction to the existence of a mapping degree for maps between $SU(2)$-bundles over $S^5$. Moreover, our computation of $D(SU(3))$ corrects a mistake in a previous computation~\cite{Pue}. 

\subsection*{Outline} 

In Section \ref{ex.known} we discuss some known obstructions to the existence of maps of non-zero degree for manifolds with non-isomorphic cohomology rings. In Section \ref{bundlesintro} we briefly overview the $SU(2)$-bundles over $S^5$ and in Section \ref{proof} we prove Theorem \ref{t:s3bundlesovers5}.

\subsection*{Acknowledgments}
C. Neofytidis is grateful to Shicheng Wang for useful discussions. He also gratefully acknowledges the hospitality of Peking University where part of this research was carried out. J.-F. Lafont was partially supported by the NSF, under grant DMS-1510640.

\section{Manifolds with non-isomorphic cohomology rings}\label{ex.known}

In this section we describe a few well-known examples of manifolds with non-isomorphic cohomology rings and explain in each case the obstruction to the existence of map of non-zero degree. We refer to~\cite{dH} for a survey on this type of examples.

\subsection{Different Betti numbers}
If $f\colon M\longrightarrow N$ is a map of non-zero degree, then by Poincar\'e duality we obtain that the induced homomorphisms $f_*\colon H_*(M;\Q)\longrightarrow H_*(N;\Q)$ are surjective. In particular, the Betti numbers of $M$ are greater than or equal to the Betti numbers of $N$. Thus, for example, there is no map of non-zero degree from $S^{2n}$ to $\CP^n$ for all $n>1$.

\subsection{Same Betti numbers but different cohomology generator degrees}
Equivalent to the surjectivity in homology, the induced homomorphisms $f^*\colon H^*(N;\Q)\longrightarrow H^*(M;\Q)$ are injective whenever $f\colon M\longrightarrow N$ has non-zero degree. Thus there is no map of non-zero degree between $S^2\times S^4$ and $\CP^3$, although these manifolds have isomorphic (co)homology groups and thus equal Betti numbers. Indeed the cohomology {\it rings} of $S^2\times S^4$ and $\CP^3$ are given by 
$H^*(S^2\times S^4)=\Lambda[\alpha,\beta]$ and $H^*(\CP^3)=\Lambda[\gamma]$, where $\alpha$ and $\beta$ have degree two and $\gamma$ has degree four, which means that neither of those rings injects into the other.

\subsection{Same cohomology generator degrees but different (sub)rings}
The cohomology rings of the manifolds $\CP^2\#\CP^2$ and $\CP^2\#\overline{\CP^2}$ are generated by two elements, both of degree two. More precisely, 
$H^*(\CP^2\#\CP^2)=\Lambda[\alpha,\beta]$,
where $\alpha$ and $\beta$ have degree two with $\alpha^2=\beta^2$, and
$ H^*(\CP^2\#\overline{\CP^2})=\Lambda[\alpha,\beta]$,
where $\alpha$ and $\beta$ have degree two with $\alpha^2=-\beta^2$. In particular, neither of the those rings is isomorphic to a subring of the other, which implies that there are no maps of non-zero degree between $\CP^2\#\CP^2$ and $\CP^2\#\overline{\CP^2}$. 

Note that the information encoded in the above cohomology rings reflect the intersection forms of $\CP^2\#\CP^2$ and $\CP^2\#\overline{\CP^2}$; cf.~\cite{DW}.

\begin{rem}
The examples above all focus on differences in the cohomology, either at the level of groups, or at the level of the ring structure. 
Our arguments continue this trend, by focusing on the differences in the structure of the $\Z_2$-cohomology, viewed as a module over the 
Steenrod algebra. 
\end{rem}

\section{Principal $SU(2)$-bundles over $S^5$}\label{bundlesintro}

We now turn to the examples studied in this paper, of manifolds with isomorphic cohomology rings. Recall that principal $SU(2)$-bundles over $S^5$ are classified by $\pi_4(SU(2))\cong\Z_2$, and so there exist two such bundles: the trivial bundle $SU(2)\times S^3\cong S^3\times S^5$, and the twisted bundle $SU(3)$. 
For the latter bundle, recall that 
\[
S^3\cong SU(2)=\biggl\{\left( \begin{array}{cc}
  a & b \\
  -\overline{b} & \overline{a} \\
\end{array} \right)  : \ a,b\in\C, \ |a|^2+|b|^2=1 \biggl\}
\]
can be embedded in $SU(3)$ via
\[
SU(2)\hookrightarrow SU(3) :
\left( \begin{array}{cc}
  a & b \\
  -\overline{b} & \overline{a} \\
\end{array} \right) \mapsto
\left( \begin{array}{ccc}
  a & b & 0\\
  -\overline{b} & \overline{a} & 0 \\
  0 & 0 & 1\\
\end{array} \right).
\]
The Lie group $SU(3)$ acts on the unit sphere $S^5 \subset \mathbb C^3$, and the
stabilizer of the vector $(0,0,1)\in S^5$ is precisely the embedded $SU(2)$ described above. Thus the quotient $SU(3)/SU(2)$ is 
homeomorphic to $S^5$ and a homeomorphism is given by the orbit map
\[
SU(3)/SU(2)\longrightarrow S^5 : [A]\mapsto A\cdot \left( \begin{array}{c}
  0\\
  0\\
  1\\
\end{array} \right), \ A\in SU(3). 
\]
Concretely, this means that the projection map of the bundle $SU(3)\stackrel{p}\longrightarrow S^5$ is the projection to the third column of $A$.

Both $S^3\times S^5$ and $SU(3)$ have cohomology rings isomorphic to $\Lambda[\alpha,\beta]$, with generators $\alpha$ and $\beta$ in cohomology of degree three and five respectively. However, $SU(3)$ is not homotopy equivalent to $S^3\times S^5$, because the Steenrod square is (obviously) trivial on $H^3(S^3\times S^5;\Z_2)$, whereas the Steenrod square on $SU(3)$ is an isomorphism from $H^3(SU(3);\Z_2)$ to $H^5(SU(3);\Z_2)$; cf.~\cite{BS}. Of course, $S^3\times S^5$ and $SU(3)$ can be distinguished by their homotopy groups as well~\cite{MiT}.

\section{Proof of Theorem \ref{t:s3bundlesovers5}}\label{proof} 

In this section we prove Theorem \ref{t:s3bundlesovers5}. 

\medskip

Before beginning our proof, we discuss briefly the degree of a bundle map. 
Given two fiber bundles $F\hookrightarrow E_i \stackrel{\pi_i}\longrightarrow B_i$ with orientation preserving holonomy, where $F$ and $B_i$ are connected manifolds, let $\phi\colon E_1\longrightarrow E_2$ be a bundle map, i.e. there is a map $\phi_B\colon B_1\longrightarrow B_2$ on base spaces such that $\phi_B\circ \pi_1=\pi_2\circ\phi$. From the fact that all the spaces are connected, there is also an induced map $\phi_F$ from the fiber of $E_1$ to the fiber of $E_2$, which is well-defined up to homotopy. In particular, one can calculate the integers $\deg(\phi_B), \deg(\phi_F)$. In this setting, we have the following presumably well-known result:

\begin{lem}\label{l:degreebundlemap}
$\deg(\phi) = \deg(\phi_F) \cdot \deg(\phi_B)$.
\end{lem}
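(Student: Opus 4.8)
The plan is to compute the effect of $\phi$ on top cohomology by using a suitable local trivialization of the two bundles and the multiplicativity of the cohomological fundamental classes under products. Write $n = \dim F$ and $m = \dim B_i$, so that $\dim E_i = n+m$. Since $F$ and $B_i$ are connected closed oriented manifolds (orientability of $E_i$ follows from the orientation-preserving holonomy hypothesis), we have $\omega_{E_i} = \pi_i^*(\omega_{B_i}) \smile \iota_{i*}(\omega_F)$ in $H^{n+m}(E_i)$, where $\iota_i\colon F \hookrightarrow E_i$ is the fiber inclusion over a chosen basepoint; more precisely, one chooses a class $\tau_i \in H^n(E_i)$ restricting to $\omega_F$ on the fiber (a Thom-type class, which exists over a point of $B_i$ and can be built from a local trivialization), and then $\omega_{E_i} = \pi_i^*(\omega_{B_i}) \smile \tau_i$. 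The first step is to set up this factorization carefully, choosing $\tau_2$ on $E_2$ and then analyzing $\phi^*\tau_2$.

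The second step is to identify $\phi^*\tau_2$. Restricting to a fiber of $E_1$, the composite $F = \pi_1^{-1}(\mathrm{pt}) \hookrightarrow E_1 \xrightarrow{\phi} E_2$ is (up to homotopy) the fiber map $\phi_F$ followed by a fiber inclusion of $E_2$, so $\iota_1^*\phi^*\tau_2 = \phi_F^*(\omega_F) = \deg(\phi_F)\cdot \omega_F$. Hence $\phi^*\tau_2 = \deg(\phi_F)\cdot \tau_1 + (\text{terms pulled back from the base})$, i.e. $\phi^*\tau_2 - \deg(\phi_F)\,\tau_1$ lies in the image of $\pi_1^*\colon H^n(B_1) \to H^n(E_1)$ — this is where I would invoke the Serre spectral sequence (or a Mayer–Vietoris / Leray–Hirsch argument over the base) to control the kernel of restriction to a fiber. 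Combined with $\phi^*\pi_2^* = \pi_1^*\phi_B^*$ and $\phi_B^*(\omega_{B_2}) = \deg(\phi_B)\cdot\omega_{B_1}$, we get
\[
\phi^*(\omega_{E_2}) = \phi^*(\pi_2^*\omega_{B_2}) \smile \phi^*(\tau_2) = \bigl(\deg(\phi_B)\,\pi_1^*\omega_{B_1}\bigr)\smile\bigl(\deg(\phi_F)\,\tau_1 + \pi_1^*(\xi)\bigr)
\]
for some $\xi \in H^n(B_1)$. The third step is to observe that $\pi_1^*\omega_{B_1} \smile \pi_1^*(\xi) = \pi_1^*(\omega_{B_1}\smile\xi) = 0$ since $\omega_{B_1}\smile\xi \in H^{m+n}(B_1) = 0$ (as $m+n > m = \dim B_1$, assuming $n>0$; the case $n=0$ is trivial). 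Therefore $\phi^*(\omega_{E_2}) = \deg(\phi_B)\deg(\phi_F)\cdot \pi_1^*(\omega_{B_1})\smile\tau_1 = \deg(\phi_B)\deg(\phi_F)\cdot\omega_{E_1}$, which is exactly the claim.

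The main obstacle is the second step: making rigorous the claim that $\phi^*\tau_2$ differs from $\deg(\phi_F)\,\tau_1$ only by a class pulled back from the base, and in particular guaranteeing that a class $\tau_i \in H^n(E_i)$ restricting to $\omega_F$ on each fiber exists at all. In general such a global "fiberwise fundamental class" need not exist integrally (there can be a monodromy or a $d_2$-obstruction), so one must use the orientation-preserving holonomy hypothesis to kill the local coefficient system and then run the Serre spectral sequence of $F \hookrightarrow E_i \to B_i$: the class $\omega_F \in H^n(F) = E_2^{0,n}$ survives to $E_\infty^{0,n}$ precisely because there is nothing above it in total degree $n$, and we only need it to survive — we do not need it to be a permanent cycle lifting uniquely. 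One then picks any lift $\tau_i$; the ambiguity in $\tau_i$ lies in $\pi_i^*H^n(B_i)$ (a filtration term), which is harmless by the degree argument of step three. I would present this spectral sequence bookkeeping as the technical heart of the proof, with steps one and three being formal.
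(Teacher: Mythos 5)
Your route is entirely different from the paper's (the paper homotopes $\phi$ to make base and fiber maps transverse and counts signed preimages using the block-triangular form of $d\phi$), but as written it has a genuine gap at exactly the step you call the technical heart. A class $\tau_i\in H^n(E_i)$ restricting to $\omega_F$ on a fiber need not exist, and your spectral-sequence justification is not correct: an element of $E_2^{0,n}\cong H^n(F)$ is never the \emph{target} of a differential, but it can perfectly well \emph{support} one, since $d_r$ sends $E_r^{0,n}$ to $E_r^{r,\,n-r+1}$, which lives in total degree $n+1$ and is in general nonzero (for sphere bundles the relevant differential on $\omega_F$ is cup product with the Euler class). So ``nothing above it in total degree $n$'' does not make $\omega_F$ a permanent cycle. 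A concrete counterexample to your step one is the Hopf fibration $S^1\hookrightarrow S^3\longrightarrow S^2$, a principal (hence orientation-preserving) bundle of closed oriented manifolds: here $H^1(S^3)=0$, so no class restricting to $\omega_{S^1}$ exists and the factorization $\omega_{E}=\pi^*(\omega_{B})\smile\tau$ is unavailable, even though the lemma itself is true for bundle maps of this fibration. (For the two bundles actually used in the paper the restriction $H^3(E)\to H^3(S^3)$ is onto, so your $\tau$ exists there; but the lemma is stated and needed for general $F\hookrightarrow E_i\to B_i$.) A secondary inaccuracy: even when $\tau_i$ exists, a degree-$n$ class vanishing on the fiber need only lie in the filtration step $F^1H^n(E_1)$, not in $\im\pi_1^*=F^nH^n(E_1)$; your step three survives this, but only via multiplicativity of the filtration, $F^mH^m\smile F^1H^n\subseteq F^{m+1}H^{m+n}(E_1)=0$, not via the literal statement ``pulled back from the base.''

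The clean way to salvage your cohomological approach is to drop $\tau$ altogether and work at the opposite corner of the spectral sequence: with the holonomy acting trivially on $H^n(F)$, the group $E_2^{m,n}=H^m(B_i;H^n(F))\cong\Z$ neither receives nor supports any differential for degree reasons, so $H^{m+n}(E_i)\cong E_\infty^{m,n}=E_2^{m,n}$, and naturality of the Serre spectral sequence under the bundle map identifies $\phi^*$ on top cohomology with $\phi_B^*\otimes\phi_F^*$, i.e. multiplication by $\deg(\phi_B)\cdot\deg(\phi_F)$, after checking that this identification is compatible with the chosen (fiber-first) orientations. This yields the lemma in the stated generality; the paper's transversality argument is the more elementary alternative and avoids spectral sequences entirely.
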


\begin{proof}
We will calculate $\deg(\phi)$ by taking a specific homotopy of $\phi$, and count oriented pre-images of a point $x\in E_2$ (we use the definition of degree from differential topology). The point 
$x$ projects to a point $q \in B_2$ in the base. One can then use the local product structure to view $x = (p,q)$ where $p\in F_q$, the fiber
above $q$. 
Homotope $\phi$, through bundle maps, so that the base map $\phi_B$ is transverse to $q$. Then further homotope $\phi$ via fiber-preserving maps
so that, for each point $q_i\in B_1$ satisfying $\phi_B(q_i) =q$, the restriction of the resulting map to the fiber $F_{q_i} \subset E_2$ is transverse to 
the point $p\in F_q$. By abuse of notation, we still call the resulting map $\phi$.

Now the pre-image of $x$ will consist of a finite collection of points, each of the form $(p_{j}, q_i)$ (in suitable local product
structure) where the various $p_j\in F_{q_i}$ for $j\in I_i$ (each indexing set $I_i$ depends on the corresponding $i$). 
One can take the orientation on the total spaces to be locally given by the wedge
of the fiber orientation with the base orientation -- this is well-defined since the holonomy is orientation preserving. Picking a horizontal lift
of $T_{q_i}B_1$ at each point $(p_j, q_i)$, we can decompose $T_{(p_j, q_i)}E_1 = T_{p_j}F_{q_i} \oplus \overline{T_{q_i}B_1}$. Similarly,
we decompose $T_{(p,q)}E_2= T_pF_q \oplus \overline{T_qB_2}$.

To compute the degree, we need to look at the sign of the determinant of $d\phi_{(p_j, q_i)}$ at each of these pre-image points. Denote
by $\phi_i$ the restriction of $\phi$ to the fiber $\phi_i\colon F_{q_i} \longrightarrow F_q$ (and note that $\phi_i \simeq \phi_F$). From our
choice of bases, we see that the matrix for $d\phi_{(p_j, q_i)}$ takes the block form $\left( \begin{array}{cc}
d(\phi_i)_{p_j} & * \\
 0 & d(\phi_B)_{q_i} \\
\end{array} \right)$, and hence $\det(d\phi_{(p_j, q_i)}) = \det(d(\phi_i)_{p_j})\cdot \det(d(\phi_B)_{q_i})$. For a matrix $A$, we will denote by
$o(A)$ the sign of $\det(A)$. Then it follows that $o(d\phi_{(p_j, q_i)})=o(d(\phi_i)_{p_j})\cdot o(d(\phi_B)_{q_i})$. We now have from the 
definition of degree that
\begin{align*}
\deg(\phi) &= \sum_{(p_j, q_i)} o(d\phi_{(p_j, q_i)}) = \sum_{(p_j, q_i)} \big[ o(d(\phi_i)_{p_j})\cdot o(d(\phi_B)_{q_i}) \big]\\
& = \sum _i \Big[\Big(\sum _{j\in I_i} o(d(\phi_i)_{p_j})\Big)\cdot o(d(\phi_B)_{q_i}) \Big]= \sum _i \big[\deg(\phi_i) \cdot o(d(\phi_B)_{q_i})\big]\\
& = \deg(\phi_F) \sum _i o(d(\phi_B)_{q_i}) = \deg(\phi_F) \cdot \deg(\phi_B),
\end{align*}
where we use the fact that all the maps $\phi_i$ are homotopic to the fiber map $\phi_F$, hence have the same degree. This concludes
the proof. 
\end{proof}

Now, we are ready to prove Theorem \ref{t:s3bundlesovers5}. Item (a) is trivial because $D(S^n)=\Z$, and so we deal with the rest of the computations. As mentioned in the introduction, for a closed oriented $n$-dimensional manifold $M$, we denote by $\omega_M\in H^n(M; \Z)$ the generator determined by the orientation of $M$.

\begin{lem}\label{l1}
$D(S^3\times S^5, SU(3))=2\Z$.
\end{lem}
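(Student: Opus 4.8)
The plan is to prove both directions: first that every even integer $2k$ is realized as a degree of some map $S^3\times S^5\to SU(3)$, and second that no odd integer can be so realized. For the realization part, the natural strategy is to exploit the fiber bundle structure $S^3 \hookrightarrow SU(3) \stackrel{p}{\longrightarrow} S^5$ together with Lemma \ref{l:degreebundlemap}. A map $S^3 \times S^5 \to SU(3)$ that is a bundle map over some self-map $\phi_B$ of $S^5$ will have degree $\deg(\phi_F)\cdot\deg(\phi_B)$, where $\phi_F\colon S^3 \to S^3$ is the induced fiber map. So it would suffice to produce, for a bundle map with $\deg(\phi_B)$ arbitrary, a fiber degree that is forced to be even; alternatively, compose a degree-$2$ map into $SU(3)$ with a degree-$k$ self-map of the target (but self-maps of $SU(3)$ are exactly what part (d) restricts, so more care is needed — better to realize $2$ and $2k$ directly). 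Concretely, I would first build a single map of degree $2$: take the quotient map $S^3 \times S^5 \to S^3 \times S^5 / (S^3 \times \{pt\} \cup \{pt\}\times S^5) \cong S^8$, no — instead, use the obvious projection-type construction giving a bundle map inducing a degree-$2$ fiber map $S^3 \to S^3$, or precompose the standard degree-$1$ candidate and double on the fiber. Then, to get all even multiples, compose with degree-$k$ self-maps of $S^5 = SU(3)/SU(2)$ appropriately lifted, or observe $2\Z = \{2k : k \in \Z\}$ is closed under the composition $2 \mapsto 2k$ realized on the $S^5$-base factor.

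For the obstruction that odd degrees are impossible, the key is the Steenrod square. Suppose $f\colon S^3\times S^5\to SU(3)$ has degree $d$. Work with $\Z_2$-coefficients. Write $\alpha_0\in H^3(S^3\times S^5;\Z_2)$, $\beta_0\in H^5$, and $\overline\alpha\in H^3(SU(3);\Z_2)$, $\overline\beta\in H^5(SU(3);\Z_2)$ for the generators, so that $\overline\alpha\cup\overline\beta$ generates $H^8(SU(3);\Z_2)$ and $f^*(\overline\alpha\cup\overline\beta)=\bar d\cdot(\alpha_0\cup\beta_0)$ where $\bar d$ is $d \bmod 2$. Now compute $f^*(\overline\beta)$ two ways: on one hand $f^*(\overline\beta)=\bar\lambda\,\beta_0$ for some $\bar\lambda\in\Z_2$ (there is nothing else in $H^5$ since $H^5(S^3\times S^5;\Z_2)\cong\Z_2$ generated by $\beta_0$); on the other hand $\overline\beta = Sq^2\overline\alpha$, so $f^*(\overline\beta)=f^*(Sq^2\overline\alpha)=Sq^2(f^*\overline\alpha)$. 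But $f^*\overline\alpha = \bar\mu\,\alpha_0$ and $Sq^2\alpha_0 = 0$ because $Sq^2$ vanishes on $H^3(S^3\times S^5;\Z_2)$ (it vanishes on each sphere factor's generator, and $S^3$ has no degree-$5$ cohomology at all — indeed $Sq^2$ on a $3$-dimensional class lands in degree $5$, which forces it into the cross-term that is zero here). Hence $f^*\overline\beta = 0$, so $\bar\lambda = 0$. Then $f^*(\overline\alpha\cup\overline\beta)=f^*\overline\alpha\cup f^*\overline\beta = 0$, forcing $\bar d = 0$, i.e. $d$ is even.

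The main obstacle I anticipate is not the obstruction half — that is a short Steenrod-square argument once set up carefully — but rather the construction of explicit maps realizing every even integer, and in particular verifying that a concrete bundle map induces a fiber map of the claimed degree. The subtlety is that the fiber inclusion $S^3 = SU(2)\hookrightarrow SU(3)$ is not null-homotopic and the clutching data ($\pi_4(SU(2))\cong\Z_2$ nontrivial) is exactly what prevents a degree-$1$ map, so the construction must "absorb" this $\Z_2$ obstruction — naively one wants a section-like map $S^5 \to SU(3)$, but none exists, and the degree-$2$ map has to be built so that going around twice kills the obstruction. I would handle this by working with the cofiber/attaching-map description: $SU(3)$ has a CW structure $S^3\cup e^5\cup e^8$, and a map from $S^3\times S^5$ (which is $S^3\vee S^5 \cup e^8$) can be constructed cell by cell, sending the $S^3$ wedge summand to $S^3\subset SU(3)$ by a degree-$a$ map, the $S^5$ summand into $SU(3)$ by a map of "degree" $b$ into the $e^5$-cell modulo the attaching relation, and checking the obstruction to extending over $e^8$ forces the product $ab$ to be even — then realizing all even values by choosing $a, b$ suitably. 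Matching the author's approach, I expect they instead phrase the realization via Lemma \ref{l:degreebundlemap} applied to honestly-constructed bundle maps, which is cleaner; either way, confirming the parity constraint on the constructed maps is the crux.
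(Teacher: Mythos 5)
Your obstruction argument (no odd degrees) is correct and is essentially the paper's: since $Sq^2\colon H^3(SU(3);\Z_2)\to H^5(SU(3);\Z_2)$ sends $\overline{\alpha}$ to $\overline{\beta}$ while $Sq^2$ vanishes on $H^3(S^3\times S^5;\Z_2)$, naturality forces $f^*(\overline{\beta})=0$ and hence $\deg(f)$ even. That half is fine.

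The realization half, $2\Z\subseteq D(S^3\times S^5,SU(3))$, is a genuine gap: you never actually produce a single map $S^3\times S^5\to SU(3)$ of nonzero even degree. Your sketch cycles through alternatives (a "projection-type" bundle map with fiber degree $2$, composing with self-maps of $SU(3)$, a cell-by-cell extension over $S^3\cup e^5\cup e^8$) without completing any of them; in particular, the claim that the obstruction to extending over the $8$-cell "forces $ab$ even and all even values are realized" is exactly the statement to be proved, and whether a fiberwise map over $\id_{S^5}$ from the trivial bundle to $SU(3)$ with fiber degree $2$ even exists is itself a nontrivial obstruction-theory question you do not address. The missing idea is much simpler and puts the parity on the \emph{base}, not the fiber: take any self-map $f\colon S^5\to S^5$ of even degree and pull back the bundle $SU(3)\stackrel{p}{\longrightarrow}S^5$ along $f$. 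Since principal $SU(2)$-bundles over $S^5$ are classified by $\pi_4(SU(2))\cong\Z_2$ and pulling back along an even-degree map kills the clutching class, $f^*(SU(3))$ is the trivial bundle $S^3\times S^5$, and the canonical bundle map $\tilde f\colon f^*(SU(3))\to SU(3)$ has fiber degree $1$ and base degree $\deg(f)$, hence total degree $\deg(f)$ by Lemma \ref{l:degreebundlemap}. This realizes every even integer directly (no need to invoke self-maps of $SU(3)$, which would in any case be circular with item (d)), and it is precisely the "going around twice kills the $\Z_2$ obstruction" mechanism you anticipated, implemented on the base sphere rather than on the fiber.
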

\begin{proof}
Let $f\colon S^5\longrightarrow S^5$ be a self-map of even degree. We pull-back the bundle $SU(3)\stackrel{p}\longrightarrow S^5$ along $f$ to obtain an $SU(2)$-bundle $f^*(SU(3))$ over $S^5$ together with a map $\tilde{f}\colon f^*(SU(3))\longrightarrow SU(3)$, which has degree $\deg(f)$ by Lemma \ref{l:degreebundlemap}. Since $SU(2)$-bundles over $S^5$ are classified by $\pi_4(SU(2))=\Z_2$, and $\deg(f)$ is even, we deduce that $f^*(SU(3))=S^3\times S^5$. Thus $2\Z\subseteq D(S^3\times S^5, SU(3))$.

Conversely, suppose $f\colon S^3\times S^5\longrightarrow SU(3)$ is an arbitrary map of degree $\deg(f)$.  Let $\alpha$ and $\beta$ be generators of $H^3(SU(3);\Z)$ and $H^5(SU(3);\Z)$ respectively such that $\alpha\cup\beta=\omega_{SU(3)}$. Then from the definition of degree, we have that $f^*(\alpha)\cup f^*(\beta) = \deg(f)\cdot \omega_{S^3\times S^5}$. 
Each of the elements $f^*(\alpha), f^*(\beta)$ are multiples of the generators $\omega_{S^3}\times 1$ and $1\times \omega_{S^5}$ respectively. We will show
that $f^*(\beta)$ must be an {\it even multiple} of $1\times \omega_{S^5}$, which immediately implies that $\deg(f)$ is even.

We will use bars to denote the image of a cohomology class under the change of coefficient morphism $H^*(X; \mathbb \Z) \longrightarrow H^*(X; \mathbb \Z_2)$. Thus, we have that $\overline{\alpha}, \overline{\beta}$ are the generators of $H^3(SU(3);\Z _2)$ and $H^5(SU(3);\Z _2)$ respectively. Moreover, for any element $x\in H^*(SU(3); \Z)$, we have $\overline{f^*(x)} = f^*(\overline{x})$. Since $Sq^2\colon H^3(SU(3);\Z_2)\longrightarrow H^5(SU(3);\Z_2)$ is an isomorphism, we have that $Sq^2(\overline{\alpha}) = \overline{\beta}$. Using that $Sq^2\colon H^3(S^3\times S^5;\Z_2)\longrightarrow H^5(S^3\times S^5;\Z_2)$ is the zero morphism, we have the sequence of equalities
\[
\overline{f^*(\beta)}= f^*(\overline{\beta}) = f^*Sq^2(\overline{\alpha})=Sq^2\big({f^*(\overline{\alpha})}\big)=0.
\]
Thus $f^*(\beta)$ is an even multiple of the generator $1\times \omega_{S^5}$, completing the proof.
\end{proof}

Next, we prove one of the inclusions of item (c):

\begin{lem}\label{l2}
$D(SU(3),S^3\times S^5)\subseteq2\Z$.
\end{lem}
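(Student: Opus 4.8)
The plan is to mirror the strategy of Lemma \ref{l1}, exploiting the non-triviality of $Sq^2$ on $SU(3)$ against its triviality on $S^3 \times S^5$. Suppose $f\colon SU(3) \longrightarrow S^3 \times S^5$ is an arbitrary map, and let $a, b$ denote the generators of $H^3(S^3 \times S^5; \Z)$ and $H^5(S^3 \times S^5; \Z)$ respectively, normalized so that $a \cup b = \omega_{S^3 \times S^5}$. Then $f^*(a) = m \cdot \alpha$ and $f^*(b) = n \cdot \beta$ for some integers $m, n$, where $\alpha, \beta$ generate $H^3(SU(3);\Z)$ and $H^5(SU(3);\Z)$ with $\alpha \cup \beta = \omega_{SU(3)}$. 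By the definition of degree, $f^*(a) \cup f^*(b) = (mn) \cdot \omega_{SU(3)} = \deg(f) \cdot \omega_{SU(3)}$, so $\deg(f) = mn$. It thus suffices to show that at least one of $m, n$ is even; I will show $n$ is even.

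Pass to $\Z_2$-coefficients and write bars for the mod-$2$ reductions. Since $Sq^2$ vanishes on $H^3(S^3\times S^5;\Z_2)$ we have $Sq^2(\overline{a}) = 0$, and by naturality of Steenrod squares together with the fact that $Sq^2\colon H^3(SU(3);\Z_2) \to H^5(SU(3);\Z_2)$ is an isomorphism sending $\overline{\alpha}$ to $\overline{\beta}$, we compute
\[
\overline{n} \cdot \overline{\beta} = \overline{n \cdot \beta} = \overline{f^*(b)} = f^*(\overline{b}).
\]
On the other hand, I want to relate $\overline{b}$ to $\overline{a}$ via a Steenrod operation. The obstacle is that $\overline{b} = Sq^2(\overline{a})$ need \emph{not} hold in $S^3 \times S^5$ — indeed it fails, since $Sq^2(\overline{a}) = 0$ while $\overline{b} \neq 0$. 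So I cannot pull $\overline{b}$ directly out of a Steenrod square on the target. Instead the cleaner route is to work on the source: $\overline{\beta} = Sq^2(\overline{\alpha})$, and $\overline{f^*(a)} = f^*(\overline{a}) = \overline{m} \cdot \overline{\alpha}$, so
\[
f^*(\overline{b}) = ?
\]
must be analyzed by first noting $f^*(\overline{b})$ lies in $H^5(SU(3);\Z_2) = \Z_2\langle \overline{\beta}\rangle$. Now apply $Sq^2$ considerations differently: since $\overline{\beta} = Sq^2(\overline{\alpha}) = Sq^2(\overline{m}^{-1} f^*(\overline{a}))$ is problematic when $m$ is even, I instead argue as follows. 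Consider $Sq^2$ applied to $f^*(\overline{a}) = \overline{m}\cdot\overline{\alpha}$: by naturality, $Sq^2(f^*(\overline{a})) = f^*(Sq^2(\overline{a})) = f^*(0) = 0$. But $Sq^2(\overline{m}\cdot\overline{\alpha}) = \overline{m}\cdot Sq^2(\overline{\alpha}) = \overline{m}\cdot\overline{\beta}$. Hence $\overline{m} = 0$, i.e. $m$ is even, and therefore $\deg(f) = mn$ is even, giving $D(SU(3), S^3\times S^5) \subseteq 2\Z$.

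The main subtlety — really the only point requiring care — is the direction in which naturality of $Sq^2$ is applied: here one pushes the Steenrod operation \emph{forward} along $f^*$ from the source relation $Sq^2(\overline{\alpha}) = \overline{\beta}$, rather than attempting (as in Lemma \ref{l1}) to express a target class as a Steenrod square. In Lemma \ref{l1} the map went \emph{into} $SU(3)$, so one exploited $Sq^2(\overline{\alpha}_{SU(3)}) = \overline{\beta}_{SU(3)}$ on the target; here the roles are reversed and $SU(3)$ is the source, so one exploits the same relation but reads the commuting square $f^* Sq^2 = Sq^2 f^*$ in the opposite order. Once this is set up correctly the computation is immediate, and no further input (such as bundle-theoretic arguments or Lemma \ref{l:degreebundlemap}) is needed for this inclusion.
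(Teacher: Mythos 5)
Your final argument is correct and is essentially the paper's own proof: writing $f^*(a)=m\alpha$, $f^*(b)=n\beta$ and using naturality, $m\cdot\overline{\beta}=Sq^2(f^*(\overline{a}))=f^*(Sq^2(\overline{a}))=0$, forces $m$ even and hence $\deg(f)=mn$ even, exactly as in the paper (your $m$ is the paper's $\kappa$). The only blemishes are cosmetic: you announce you will show $n$ is even but in fact show $m$ is even (either suffices), and the exploratory middle passage could simply be deleted.
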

\begin{proof}
Suppose $f\colon SU(3)\longrightarrow S^3\times S^5$ is a map of degree $\deg(f)$. As before, let $\alpha$ and $\beta$ be generators of $H^3(SU(3);\Z)$ and $H^5(SU(3);\Z)$ respectively. We have $f^*(\omega_{S^3}\times 1)=\kappa\cdot\alpha$ and $f^*(1\times \omega_{S^5})=\lambda\cdot\beta$, for some $\kappa,\lambda\in\Z$. In particular, $\deg(f)=\kappa\lambda$. Again, because the Steenrod square is an isomorphism from $H^3(SU(3);\Z_2)$ to $H^5(SU(3);\Z_2)$ and trivial on $H^3(S^3\times S^5;\Z_2)$ we obtain
\[
\kappa\cdot\overline{\beta}=\kappa\cdot Sq^2(\overline{\alpha})=Sq^2f^*(\overline{\omega_{S^3}\times 1})=f^*Sq^2(\overline{\omega_{S^3}\times 1})=0.\]
This means that $\kappa$ must be even, and so $\deg(f)$ is even.
\end{proof}

In order to prove the reverse inclusion of item (c), we need to construct some maps with controlled degree. This is established in

\begin{lem}\label{l3}
$2\Z\subseteq D(SU(3),S^3\times S^5)$.
\end{lem}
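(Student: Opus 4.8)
The plan is to deduce everything from the construction of a single map $q\colon SU(3)\to S^3$ with $q^*(\omega_{S^3})=2\alpha$, where $\alpha\in H^3(SU(3);\Z)$ and $\beta\in H^5(SU(3);\Z)$ are the generators with $\alpha\cup\beta=\omega_{SU(3)}$. Indeed, writing $p\colon SU(3)\to S^5$ for the bundle projection, the Serre (equivalently Gysin) spectral sequence of $SU(2)\to SU(3)\xrightarrow{p}S^5$ degenerates, so $p^*(\omega_{S^5})=\beta$; then for any $\lambda\in\Z$, choosing $\phi_\lambda\colon S^5\to S^5$ of degree $\lambda$ and setting $F_\lambda=(q,\phi_\lambda\circ p)\colon SU(3)\to S^3\times S^5$, one gets
\[
F_\lambda^*(\omega_{S^3}\times\omega_{S^5})=q^*(\omega_{S^3})\cup p^*\phi_\lambda^*(\omega_{S^5})=2\alpha\cup\lambda\beta=2\lambda\,\omega_{SU(3)},
\]
so $\deg F_\lambda=2\lambda$ and $2\Z\subseteq D(SU(3),S^3\times S^5)$. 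So the whole point is to build $q$.

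To build $q$ I would work with the cell structure of $SU(3)$. Because $Sq^2\colon H^3(SU(3);\Z_2)\to H^5(SU(3);\Z_2)$ is an isomorphism, the $3$-cell (which may be taken to be the subgroup $SU(2)$, since $\pi_3(SU(2))\to\pi_3(SU(3))$ is an isomorphism) and the $5$-cell of $SU(3)$ are joined by a representative $\eta\colon S^4\to S^3$ of the non-zero element of $\pi_4(S^3)=\Z_2$; hence $SU(3)^{(5)}\simeq X:=S^3\cup_\eta e^5$ and $SU(3)=X\cup_\rho e^8$ for an attaching map $\rho\colon S^7\to X$. On $SU(3)^{(3)}=SU(2)$ I would take $q$ to be the squaring map $\psi_2\colon SU(2)\to SU(2)$, $g\mapsto g^2$; since $SU(2)$ is an $H$-space, $\psi_2$ represents $2\iota_3\in\pi_3(S^3)$, so $\deg\psi_2=2$. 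The obstruction to extending over the $5$-cell of $X$ is $\psi_2\circ\eta=2\eta=0\in\pi_4(S^3)$, so $\psi_2$ extends to some $\bar q\colon X\to S^3$, and $\bar q^*$ is multiplication by $2$ on $H^3$. The only remaining obstruction, to extending $\bar q$ over the top cell, is the class $o:=[\bar q\circ\rho]\in\pi_7(S^3)=\Z/2$; once $o=0$, the extension $q\colon SU(3)\to S^3$ exists and satisfies $q^*(\omega_{S^3})=2\alpha$ (comparing $3$-skeleta), finishing the proof.

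The geometric input I would use to attack $o=0$ is that the bundle $SU(2)\to SU(3)\xrightarrow{p}S^5$ is trivial over $S^5\smallsetminus\{e_3\}$, so collapsing the fibre $p^{-1}(e_3)=SU(2)$ amounts, up to homotopy, to collapsing the boundary of a trivial disc bundle over $SU(2)$ (the clutching function is irrelevant because the collapsed piece is contractible); hence
\[
SU(3)/SU(2)\ \simeq\ SU(2)_+\wedge S^5\ \simeq\ S^5\vee S^8 .
\]
Comparing with the cell structure, $SU(3)/SU(2)=S^5\cup_{c\circ\rho}e^8$ with $c\colon X\to X/S^3=S^5$ the collapse, so $c\circ\rho=0$: thus $\rho\in\ker\big(c_*\colon\pi_7(X)\to\pi_7(S^5)\big)$. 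Then I would invoke the Blakers–Massey theorem for the cofibration $S^3\xrightarrow{i}X\xrightarrow{c}S^5$: the sequence $\pi_k(S^3)\to\pi_k(X)\to\pi_k(S^5)$ is exact for $k\le 6$, and in the critical dimension $k=7$ the kernel of $c_*$ is controlled by $\operatorname{im}(i_*\colon\pi_7(S^3)\to\pi_7(X))$ together with Whitehead-product classes built from the bottom cell $S^3$ and the $5$-cell (e.g.\ $[\,i_*\iota_3,\mu\,]_X$, $\mu\in\pi_5(X)\cong\Z$ the generator). Since $\bar q\circ i=\psi_2$ induces multiplication by $2$, hence the zero map, on the $2$-torsion group $\pi_7(S^3)$, and since by naturality of Whitehead products $\bar q$ sends the Whitehead-product classes to Whitehead products in $SU(2)$, which vanish because $SU(2)$ is an $H$-space, one concludes $\bar q_*(\rho)=0$.

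The hard part is exactly this last step, $o=0$. Two sub-points carry the weight: the identification $SU(3)/SU(2)\simeq S^5\vee S^8$, which pins $\rho$ down modulo the bottom cell, and — more delicately — the Blakers
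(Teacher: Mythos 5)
Your reduction is fine and genuinely different in spirit from the paper's argument: producing one map $q\colon SU(3)\to S^3$ with $q^*(\omega_{S^3})=2\alpha$ and pairing it with $\phi_\lambda\circ p$ does give all even degrees (the paper instead writes down an explicit bundle self-map $g$ of $SU(3)$ covering a degree-$2$ map of $S^5$ and restricting to squaring on the fibre, and factors it through the pulled-back trivial bundle, using the bundle-degree lemma). The extension of the squaring map over the $5$-skeleton $X=S^3\cup_\eta e^5$ is also correct ($\eta$ is a suspension, so $(2\iota_3)\circ\eta=2\eta=0$), as is the identification $SU(3)/SU(2)\simeq S^5\vee S^8$, hence $c_*[\rho]=0$.

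The gap is the last step, $o=[\bar q\circ\rho]=0$ in $\pi_7(S^3)=\Z_2$, and the structural claim you lean on there is false. In fact $\pi_7(X)\cong\Z$, generated by the attaching map $\rho$ itself: since $\pi_8(SU(3),X)\cong\Z$ is generated by the characteristic map of the $8$-cell and $\pi_7(SU(3))=0$, the group $\pi_7(X)$ is cyclic generated by $\rho$, and rationally $X\simeq_{\Q}S^3\vee S^5$ gives rank one. Consequently $i_*\colon\pi_7(S^3)=\Z_2\to\pi_7(X)\cong\Z$ is zero, and $c_*\colon\pi_7(X)\to\pi_7(S^5)$ is zero, so $\ker c_*=\Z\langle\rho\rangle$. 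On the other hand, the characteristic element $\chi\in\pi_5(X,S^3)\cong\Z$ does \emph{not} lift to $\pi_5(X)$ (its boundary is $\eta\neq 0$); only $2\chi$ lifts, so for the generator $\mu$ of $\pi_5(X)\cong\Z$ one gets $j_*[\mu,i_*\iota_3]=[2\chi,\iota_3]=2\,j_*\rho$, hence $[\mu,i_*\iota_3]=\pm 2\rho$. Thus the subgroup generated by $\operatorname{im}(i_*)$ and the absolute Whitehead products $[i_*\iota_3,\mu]$ is exactly $2\Z\langle\rho\rangle$, and $\rho$ -- the one element you must control -- lies outside it: the Blakers--Massey kernel in the critical dimension is generated by the \emph{relative} Whitehead product $[\chi,\iota_3]$, which is not an absolute one, so naturality under $\bar q$ gives you no handle on $\bar q_*(\rho)$. (There is a further unaddressed point: the two extensions $\bar q$ over the $5$-cell differ by the action of $\pi_5(S^3)=\Z_2$, and the top obstruction can a priori depend on this choice, so at best you must show that \emph{some} extension has $o=0$.) As written, the argument therefore does not establish the existence of $q$; the paper's explicit construction circumvents exactly this point.
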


\begin{proof}
Since $D(S^3\times S^5)=\Z$, it suffices to construct a single map $h\colon SU(3)\longrightarrow S^3\times S^5$ satisfying $\deg(h)=2$. We start
by defining a self-map $g\colon SU(3)\longrightarrow SU(3)$, and will see that $g$ factors through the desired map $h$.

So let $A=\left( \begin{array}{ccc}
  a & b & u\\
  c & d & v \\
  p & q & w\\
\end{array} \right)\in SU(3)$ and define the map $g\colon SU(3)\longrightarrow SU(3)$ by the formula
\[
{\footnotesize g(A)=
       \left( \begin{array}{ccc}    
-u^2(\overline{ab}+a\overline{b})+(uv-\overline{w})(-a^2+|b|^2)  & u^2(\overline{a}^2-|b|^2)-(uv-\overline{w})(ab+\overline{a}b)  & uw+\overline{v}\\
   -(uv+\overline{w})(\overline{ab}+a\overline{b})+v^2(-a^2+|b|^2)  & (uv+\overline{w})(\overline{a}^2-|b|^2)-v^2(ab+\overline{a}b)  & vw-\overline{u}\\   
  -(uw-\overline{v})(\overline{ab}+a\overline{b})+(\overline{u}+vw)(-a^2+|b|^2)  & (uw-\overline{v})(\overline{a}^2-|b|^2)-(\overline{u}+vw)(ab+\overline{a}b)  & w^2  
\end{array} \right).}
\]

If $A=\left( \begin{array}{ccc}
 a & b & 0\\
   -\overline{b} & \overline{a} & 0 \\
     0 & 0 & 1\\
\end{array} \right)\in SU(2) \subseteq SU(3)$, then it is easy to check that
\[
g(A)=\left( \begin{array}{ccc}
  a^2-|b|^2 & ab+\overline{a}b & 0\\
   -\overline{ab}-a\overline{b} & \overline{a}^2-|b|^2 & 0 \\
  0 & 0 & 1\\
\end{array} \right) = A^2. 
\]
This tells us that $g(SU(2))\subseteq SU(2)$, and the restriction of $g$ to $SU(2)$ is the squaring
map of degree $\deg(g|_{SU(2)}) =2$. 

Next let us verify that $g$ is in fact a bundle map. For any $A, B\in SU(3)$, we have that $p(A)=p(B)$ if and only if $B=AU$ for some $U\in SU(2)\subseteq SU(3)$. In that case, $A$ and $B=AU$ have the same third columns, say 
$\left( \begin{array}{c}
 u\\
  v \\
  w\\
\end{array} \right)\in S^5$. This means that $g(A)$ and $g(AU)$ have also the same third columns, namely  $\left( \begin{array}{c}
 uw+\overline{v}\\
  vw-\overline{u} \\
  w^2\\
\end{array} \right)\in S^5$. Thus there is a well-defined induced self-map $f\colon S^5\longrightarrow S^5$ given by 
\[
 \left( \begin{array}{c}
 u\\
  v \\
  w\\
\end{array} \right)\mapsto \left( \begin{array}{c}
 uw+\overline{v}\\
  vw-\overline{u} \\
  w^2\\
\end{array} \right),
\]
which has degree $2$ (see also Theorem 2.1 of \cite{PR}) and such that $p\circ g=f\circ p$.

We pull-back the bundle $SU(3)\stackrel{p}\longrightarrow S^5$ along $f$ to obtain an $SU(2)$-bundle over $S^5$ 
\[
f^*(SU(3))=\{(S,T)\in S^5\times SU(3) : f(S)=p(T)\}.
\]
Since $\deg(f)=2$, this pull-back bundle $f^*(SU(3))$ is in fact the trivial bundle -- we denote by $\tilde f$ the bundle map
$\tilde f : f^*(SU(3)) \longrightarrow SU(3)$ (projection onto the second factor).  Since $g$ is a bundle map, it factors through the pull-back bundle $f^*(SU(3))$. Thus we have that $g=\tilde f \circ h$, where $h\colon SU(3)\longrightarrow f^*(SU(3))$ is the map given by $h(A):= (p(A),g(A))$. To complete the proof, we just need to check that $\deg(h)=2$. From the multiplicativity of the degree, it suffices to check that $\deg(\tilde f)=2$
and $\deg(g)=4$. But Lemma \ref{l:degreebundlemap} implies that $\deg(\tilde f) =\deg(\tilde f_{SU(2)})\cdot\deg(\tilde f_{S^5})=2\cdot 1 =2$ (see also Lemma \ref{l1}) 
and $\deg(g)=\deg(g_{SU(2)})\cdot\deg(g_{S^5})=\deg(g\vert_{SU(2)})\cdot\deg(f)=2\cdot 2=4$ (since $g$ covers $f$ and on fiber above the point $(0,0,1)\in S^5$ is the squaring map on $SU(2)$). This finishes the proof.
\end{proof}

Lemma \ref{l3} together with Lemma \ref{l2} complete the proof for item (c) of Theorem \ref{t:s3bundlesovers5}. Finally, we prove item (d).

\begin{lem}\label{l4}
$D(SU(3))=4\Z\cup\{2k+1:k\in\Z\}.$
\end{lem}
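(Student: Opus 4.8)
The plan is to establish the two inclusions separately. For $D(SU(3)) \subseteq 4\Z \cup \{2k+1 : k \in \Z\}$, I would argue that a self-map $f\colon SU(3) \longrightarrow SU(3)$ of even degree must in fact have degree divisible by $4$. Writing $f^*(\alpha) = \kappa\cdot\alpha$ and $f^*(\beta) = \lambda\cdot\beta$ for the integral generators (using that $H^3$ and $H^5$ are each infinite cyclic, with the orientation class $\alpha \cup \beta = \omega_{SU(3)}$), we get $\deg(f) = \kappa\lambda$. Passing to $\Z_2$-coefficients and applying $Sq^2$, naturality gives $\kappa\cdot\overline{\beta} = \kappa\cdot Sq^2(\overline{\alpha}) = Sq^2(f^*(\overline{\alpha})) = f^*(Sq^2(\overline{\alpha})) = f^*(\overline{\beta}) = \lambda\cdot\overline{\beta}$, so $\kappa \equiv \lambda \pmod 2$. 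Hence if $\deg(f) = \kappa\lambda$ is even, then both $\kappa$ and $\lambda$ are even, so $4 \mid \deg(f)$; equivalently, no integer of the form $2\cdot(\text{odd})$ can occur. This is the same $Sq^2$-obstruction argument as in Lemmas \ref{l1} and \ref{l2}, now applied with both factors simultaneously.

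For the reverse inclusion, I need to realize every odd integer and every multiple of $4$. All odd degrees come for free: the degree-one identity map, composed with nothing, gives $1$, but more relevantly I would note that $D(SU(3))$ is closed under negation (precompose with an orientation-reversing self-diffeomorphism, or observe $-1 \in D(SU(3))$ and compose) and I still need odd degrees $\neq \pm 1$. The clean way is to invoke the standard fact that on a Lie group — or more elementarily on any $H$-space, or using that $SU(3)$ is $2$-connected with $H^3$ the bottom cell — one can use the co-$H$ or suspension-type constructions; however, the most self-contained route is to exhibit maps directly. I would build maps of degree $d$ for $d$ odd by a pinch-type construction: collapsing a top cell and wedging, or by using that $SU(3)$ admits a self-map realizing multiplication by $d$ on both $\alpha$ and $\beta$ whenever... — but that is exactly what fails for $d$ even-but-not-$4$. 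The honest approach is: for odd $d$, realize $\deg = d$ by a map acting as multiplication by $d$ on $H^3$ and by $1$ on $H^5$ (or vice versa) — which is consistent with the $Sq^2$ constraint since $d \equiv 1 \pmod 2$ — constructed via the bundle structure, pulling back along a degree-$d$ self-map of $S^5$ composed appropriately so that the fiber map has degree $1$; and then take composites to fill out all odd values and, multiplying odd by the degree-$4$ map, potentially more.

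For multiples of $4$: the map $g\colon SU(3) \longrightarrow SU(3)$ constructed explicitly in the proof of Lemma \ref{l3} has $\deg(g) = 4$ (shown there via Lemma \ref{l:degreebundlemap}, as $\deg(g) = \deg(g|_{SU(2)})\cdot\deg(f) = 2\cdot 2$). Composing $g$ with itself $k$ times, or composing $g$ with a self-map of odd degree $m$, yields a self-map of degree $4^k$ or $4m$; combined with closure under addition-free composition this is not obviously enough, so I would instead construct, for each $k \in \Z$, a bundle self-map with base-degree $2k$ and fiber-degree $2$, giving total degree $4k$ by Lemma \ref{l:degreebundlemap} — pulling back $SU(3) \to S^5$ along a degree-$(2k)$ self-map $f_k$ of $S^5$ (the pullback is trivial since $2k$ is even, hence equals $S^3\times S^5$), then composing $SU(3) \to S^3\times S^5$ (degree $2$, from Lemma \ref{l3}) followed by the bundle map $S^3\times S^5 = f_k^*(SU(3)) \to SU(3)$ which has fiber-degree $1$... one must be careful the arithmetic gives $4k$ and not $2k$; the correct combination is $g$ followed by a self-map scaling the base by $k$. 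I expect the main obstacle to be precisely this bookkeeping: producing enough honest maps to cover all of $4\Z$ and all odd integers while never accidentally producing a $2\cdot(\text{odd})$ — i.e. making the realization constructions respect the factorization $\deg = \deg(\phi_F)\cdot\deg(\phi_B)$ of Lemma \ref{l:degreebundlemap} in a way that forces both factors even exactly when the total is a multiple of $4$. Once the building blocks (a degree-$2$ map $SU(3)\to S^3\times S^5$, a degree-$2$ bundle map $S^3\times S^5\to SU(3)$, odd-degree self-maps of $S^5$ lifting to fiber-degree-one bundle maps, and the degree-$2$ squaring on the $S^3$ fiber) are in place, all of $4\Z\cup\{2k+1\}$ is obtained by composition and multiplicativity of degree.
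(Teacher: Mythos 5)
Your $Sq^2$ obstruction argument (an even-degree self-map must have degree divisible by $4$) and your realization of $4\Z$ are correct and essentially the paper's: the paper simply composes a degree-$2$ map $SU(3)\longrightarrow S^3\times S^5$ from item (c) with degree-$2k$ maps $S^3\times S^5\longrightarrow SU(3)$ from item (b), which is exactly what your ``$h$ of degree $2$ followed by the bundle map $f_k^*(SU(3))=S^3\times S^5\longrightarrow SU(3)$ of degree $2k$'' amounts to, giving $2\cdot 2k=4k$; the bookkeeping you worried about does come out right, whereas your alternative ``$g$ followed by a self-map scaling the base by $k$'' would only make sense for $k$ odd, so the former is the combination to keep.

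The genuine gap is in the realization of odd degrees. Pulling back $SU(3)\stackrel{p}\longrightarrow S^5$ along a degree-$d$ self-map $f_d$ of $S^5$ gives a bundle map $\tilde f_d\colon f_d^*(SU(3))\longrightarrow SU(3)$ of degree $d$ (fiber degree $1$, base degree $d$, Lemma \ref{l:degreebundlemap}), but this is a \emph{self-map of $SU(3)$} only if the total space $f_d^*(SU(3))$ is $SU(3)$, i.e.\ only if the pulled-back bundle is the nontrivial one. You never address this, and it is the one nontrivial point of the step: if the pullback were the trivial bundle, your construction would only produce a degree-$d$ map $S^3\times S^5\longrightarrow SU(3)$, which is not a self-map of $SU(3)$ at all (and indeed cannot exist for $d$ odd). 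That last observation is precisely how the paper closes the gap: since $D(S^3\times S^5,SU(3))=2\Z$ by item (b) and $\deg(\tilde f_d)=d$ is odd, the pullback cannot be trivial, hence --- there being only two principal $SU(2)$-bundles over $S^5$ --- it is $SU(3)$ and $\tilde f_d$ is the desired self-map of odd degree $d$. (Alternatively, precomposing the classifying map with a degree-$d$ map multiplies the classifying element of $\pi_4(SU(2))\cong\Z_2$ by $d$, so odd $d$ preserves nontriviality.) Once this identification is made, every odd $d$ is realized directly; the hedged alternatives in your sketch (pinch or co-$H$/suspension constructions, ``take composites to fill out all odd values'') are unnecessary and should be dropped.
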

\begin{proof}
Items (b) and (c) give $4\Z\subseteq D(SU(3))$. Now let $\alpha$ and $\beta$ be generators of $H^3(SU(3);\Z)$ and $H^5(SU(3);\Z)$ respectively such that $\alpha\cup\beta=\omega_{SU(3)}$. For any self-map $g\colon SU(3)\longrightarrow SU(3)$, we have $\deg(g)\cdot \omega_{SU(3)}= g^*(\alpha)\cup g^*(\beta)$. If $g^*(\alpha)=n\cdot\alpha$, then since the Steenrod square $Sq^2\colon H^3(SU(3);\Z_2)\longrightarrow H^5(SU(3);\Z_2)$
is an isomorphism, 
we deduce that 
\begin{equation}\label{eq:even}
g^*(\overline{\beta})=g^*Sq^2(\overline{\alpha})=Sq^2g^*(\overline{\alpha})=n\cdot Sq^2(\overline{\alpha})=n\cdot\overline{\beta}.
\end{equation}
This means that $g^*(\alpha)$ is an even multiple of $\alpha$ if and only if $g^*(\beta)$ is an even multiple of $\beta$. We deduce that $2m\notin D(SU(3))$ for $m$ odd.

Finally, it remains to show that every odd integer is realized as a mapping degree of a self-map of $SU(3)$. Let $f_m\colon S^5\longrightarrow S^5$ be a map of any odd degree $m$. As before, we pull-back the bundle $SU(3)\stackrel{p}\longrightarrow S^5$ along $f_m$ to obtain an $SU(2)$-bundle $f_m^*(SU(3))$ over $S^5$ together with a map $\tilde{f}_m\colon f_m^*(SU(3))\longrightarrow SU(3)$ of the same degree $m$. We know by item (b) of Theorem \ref{t:s3bundlesovers5} that $D(S^3\times S^5, SU(3))=2\Z$, which means that $f_m^*(SU(3))$ cannot be the trivial bundle. Thus $\tilde{f}_m$ is a self-map of $SU(3)$.
\end{proof}

This completes the proof of Theorem \ref{t:s3bundlesovers5}.

\section{Concluding Remarks}\label{rem}

We finish our discussion with a few remarks on our main result and its proof.

\begin{rem}
Item (d) of Theorem \ref{t:s3bundlesovers5} fixes a mistake in~\cite{Pue}, where it was claimed that $D(SU(3))=\{4^m\cdot (2k+1):m,k\in\Z\}$. The proof of Lemma 5.4 in~\cite{Pue} is not correct. More precisely, the proof of that lemma shows only that, whenever the degree of a self-map of $SU(3)$ is even, then it must be divisible by $4$ (and not a power of $4$ as claimed in~\cite{Pue}). In fact, the argument given in~\cite{Pue} is identical to the one we have seen in equation (\ref{eq:even}) in the proof of Lemma \ref{l4}.
\end{rem}

\begin{rem}
One of the main results in~\cite{Pue} is the construction of self-maps $\psi_m\colon SU(3)\longrightarrow SU(3)$ of any odd degree $m$. These maps come as a consequence of the study of cohomogeneity one manifolds. In Lemma \ref{l4}, we constructed self-maps $\tilde{f}_m\colon SU(3)\longrightarrow SU(3)$ of any odd degree $m$ in a rather simple way, by pulling back the bundle $SU(3)\stackrel{p}\longrightarrow S^5$ along self-maps $f_m\colon S^5\longrightarrow S^5$ of degree $m$. For $m\neq 1$, the maps $\tilde{f}_m$ are not homotopic to $\psi_m$. To see this,
let $\alpha$ and $\beta$ be the generators of $H^3(SU(3);\Z)$ and $H^5(SU(3);\Z)$ respectively such that $\alpha\cup\beta=\omega_{SU(3)}$. The Gysin sequence for $SU(3)$ gives
\[
0=H^1(S^5)\longrightarrow H^5(S^5)\stackrel{p^*}\longrightarrow H^5(SU(3))\longrightarrow H^2(S^5)=0,
\]
that is $\beta=p^*(\omega_{S^5})$, and so we obtain
\begin{equation}\label{eq:degreef}
\tilde{f}_m^*(\beta)=\tilde{f}_m^* p^*(\omega_{S^5})=p^*f_m^*(\omega_{S^5})=\deg(f_m)\cdot p^*(\omega_{S^5})=m\cdot\beta.
\end{equation}
However, $\psi_m^*(\beta)=\beta$ by Lemma 5.5 of~\cite{Pue}.
\end{rem}

\begin{rem}
As explained in the introduction, Theorem \ref{t:s3bundlesovers5} and its proof show that, in all cases, the Steenrod squares provide the only obstruction to the existence of a map of non-zero degree between principal $SU(2)$-bundles over $S^5$. Once this obstruction does not apply for an integer $d$, then $d$ is realized as a mapping degree. More generally, it would be interesting to find other classes of manifolds where the only obstructions to mapping degrees arise from the structure of the mod $p$ cohomology rings as modules over the corresponding Steenrod algebras.
\end{rem}

\bibliographystyle{amsplain}

\section{Added Erratum}

Part (c) of Theorem \ref{t:s3bundlesovers5} of our paper 
is not correct. The argument for part (c) relied on the existence of the self-map $g$ 
of $SU(3)$ given in the proof of Lemma~\ref{l3}. However, $\mathrm{im}(g)\notin SU(3)$. In fact, such a map $g$ cannot exist, and 
part (c) should instead be $D(SU(3),S^3\times S^5)=4\mathbb{Z}$. A proof of both these statements was given by Xueqi Wang \cite{Wang}. 


Our proof of part (d) of Theorem \ref{t:s3bundlesovers5} relied on part (c), and hence is not correct. Nevertheless, the statement of part (d) is correct as stated. 
In this erratum, we correct the proof of part (d) (our proof is independent of the results in \cite{Wang}): 
  
\begin{proof}[Proof of part (d)]
Let $\mu: SU(3)\times SU(3)\rightarrow SU(3)$ denote the Lie group multiplication. 
Given a pair of self-maps $f,g:SU(3)\rightarrow SU(3)$, we can then define a new map $\mu(f,g):SU(3)\rightarrow SU(3)$ by setting
$$\mu(f,g) (A) = f(A)\cdot g(A)$$
Let $\alpha$ and $\beta$ be generators of $H^3(SU(3);\mathbb{Z})$ and $H^5(SU(3);\mathbb{Z})$ respectively such that $\alpha\cup\beta=\omega_{SU(3)}$, the generator
for $H^8(SU(3);\mathbb{Z})$. Then for any self-map $h: SU(3)\rightarrow SU(3)$, we have that $h^*(\alpha)=n\cdot \alpha$ and $h^*(\beta)=m\cdot \beta$ for some pair of integers
$(n,m)\in \mathbb Z\times \mathbb Z$. We call this pair of integers the {\it multi-degree} of the map $h$, denoted $\text{mdeg}(h)$. Of course, if $\text{mdeg}(h)=(n,m)$, then 
$\deg(h)=nm$, so the multi-degree provides a refinement of the notion of degree.

Notice that the multi-degree is {\it additive} with respect to multiplication of maps, i.e. $\text{mdeg}(\mu(f,g)) = \text{mdeg}(f)+\text{mdeg}(g)$. Let $\text{mdeg}(f)=(n_1, m_1)$
and $\text{mdeg}(g)=(n_2, m_2)$. If we denote by 
$\mu: SU(3)\times SU(3)\rightarrow SU(3)$ the Lie group multiplication map $(A, B) \mapsto A\cdot B$, then the map $\mu(f,g)$ factors as
\[
SU(3)\xrightarrow{(f,g)} SU(3)\times SU(3)\stackrel{\mu}\longrightarrow SU(3).
\]
Thus we have
\begin{align*}
\mu(f,g)^*(\alpha) &= (f,g)^*\mu^*(\alpha) = (f,g)^*(\alpha\otimes 1 + 1\otimes\alpha)\\
&=f^*(\alpha) +g^*(\alpha)= n_1 \alpha+ n_2\alpha=(n_1 + n_2)\cdot\alpha,
\end{align*}
and similarly
\begin{align*}
\mu(f,g)^*(\beta) &= (f,g)^*\mu^*(\beta) = (f,g)^*(\beta\otimes 1 + 1\otimes\beta)\\
&=f^*(\beta) +g^*(\beta)= m_1 \beta+ m_2\beta=(m_1 + m_2)\cdot \beta.
\end{align*}
This confirms the additivity formula for the multi-degree.
In the proof of Lemma \ref{l4},  
we show that for any continuous self-map $f$ with multi-degree $\text{mdeg}(f)=(n,m)$, the integers $n,m$ must have
the same parity. We now proceed to show that this is the only constraint on the multi-degree of a map, which in particular, implies statement (d) in our
Theorem \ref{t:s3bundlesovers5}.

In the second paragraph of the proof of Lemma \ref{l4} 
we argued that every odd integer $r$ is realized as the degree of a self-map of $SU(3)$. 
Such a self-map $\tilde{f}_r\colon SU(3)\to SU(3)$ of odd degree $r$ was obtained by pulling back the bundle $SU(3)\stackrel{p}\longrightarrow S^5$ along a self-map 
$f_r\colon S^5\to S^5$ of odd degree $r$. Note in particular that $\text{mdeg}(\tilde f_r) = (1,r)$. Of course, the identity map $\mathrm{id}_{SU(3)}$ satisfies
$\text{mdeg}(\mathrm{id}_{SU(3)}) = (1,1)$, which by additivity of multi-degree tells us that the power map $\nu_s: A\mapsto A^{s}$ satisfies 
$\text{mdeg}(\nu_s) = (s,s)$, where $s$ is any integer. Finally, given an arbitrary
pair of integers $(n,m)$ of the same parity, we have that $r:=m-n+1$ is an {\it odd} integer, and hence $\tilde f_{m-n+1}$ is a self-map of multi-degree $(1, m-n+1)$.
This implies that the map $F_{nm}:SU(3)\rightarrow SU(3)$ defined via $A\mapsto A^{n-1}\cdot \tilde f_{m-n+1}(A)$ will have multi-degree 
\begin{align*}
\text{mdeg}(F_{nm}) &= \text{mdeg}(\nu_{n-1})+ \text{mdeg}(\tilde f_{m-n+1})\\
&=(n-1, n-1)+(1, m-n+1)=(n,m),
\end{align*}
as desired. This completes the proof of statement (d) in our Theorem \ref{t:s3bundlesovers5}.
\end{proof}

\begin{rem}
For maps from $SU(3)$ to $S^3\times S^5$, Wang \cite{Wang} is able to show that $\deg(f)$ cannot be congruent to $2$ mod $4$, correcting statement (c) in our
original paper. By pre-composing with suitable self-maps of $SU(3)$, it is sufficient to show that $(1,2)$ and $(2,1)$ cannot be realized as multi-degrees
of maps from $SU(3)$ to $S^3\times S^5$. We can easily rule out maps of multi-degree $(1,2)$, as composing such a map with the even degree maps 
$f:S^3\times S^5\rightarrow SU(3)$ produced in Lemma \ref{l1} would yield a self-map of $SU(3)$ of multi-degree $(1, 2k)$, contradicting the first paragraph
in our proof of Lemma \ref{l4}. On the other hand, we do not know how to rule out maps of multi-degree $(2,1)$. In \cite{Wang}, Wang obtains an obstruction by working in the homotopy group
$\pi_8(S^4)$. It is unclear whether one can obstruct such maps by only using cohomology operations.
\end{rem}

\subsection*{Acknowledgments}
We are grateful to Jim Fowler for suggesting the approach presented in the erratum. J.- F. Lafont was partially supported by the U.S.A. NSF, under grant DMS-1812028. C. Neofytidis was partially supported by the Swiss NSF, under grant FNS200021$\_$169685.

\end{document}